\newtheorem{thm}{Theorem}[section]
\newtheorem{corr}[thm]{Corollary}
\theoremstyle{definition}
\theoremstyle{remark}
\newtheorem{rem}{Remark}[section]
\numberwithin{equation}{section}
\begin{document}
\title[Estimates for eigenvalues of $\mathcal{L}_r$
operator] {Estimates for eigenvalues of $\mathcal{L}_r$ operator on
self-shrinkers }
\author{Guangyue Huang}
\address{College of Mathematics and Information Science, Henan Normal
University, Xinxiang, Henan 453007, People's Republic of China}
\email{hgy@henannu.edu.cn }
\author{Xuerong Qi}
\address{School of Mathematics and Statistics, Zhengzhou University, Zhengzhou, Henan 450001,
People's Republic of China} \email{xrqi@zzu.edu.cn}
\author{Hongjuan Li}
\address{College of Mathematics and Information Science, Henan Normal
University, Xinxiang, Henan 453007, People's Republic of China}
\email{hjli10@126.com} \subjclass[2000]{Primary 53C40, Secondary
58C40.} \keywords{ minimal submanifolds, $\mathcal{L}_r$ operator, eigenvalues, self-shrinkers.}

\maketitle

\begin{abstract}
Let $x: M\rightarrow \mathbb{R}^{N}$ be an $n$-dimensional compact
self-shrinker in $\mathbb{R}^N$ with smooth boundary $\partial\Omega$.
In this paper, we study eigenvalues of the operator
$\mathcal{L}_r$ on $M$, where $\mathcal{L}_r$ is defined by
$$\mathcal{L}_r=e^{\frac{|x|^2}{2}}{\rm
div}(e^{-\frac{|x|^2}{2}}T^r\nabla\cdot)$$ with $T^r$ denoting a
positive definite (0,2)-tensor field on $M$. We obtain ``universal" inequalities for eigenvalues of the operator $\mathcal{L}_r$. These inequalities generalize
the result of Cheng and Peng in \cite{ChengPeng2013}.
 Furthermore, we also consider the case that
equalities occur.
\end{abstract}

\section{Introduction}

A self-shrinker is an immersion $x: M\rightarrow \mathbb{R}^{N}$ of
a smooth $n$-dimensional manifold $M$ into the Euclidean space
$\mathbb{R}^{N}$ which satisfies the quasilinear elliptic system:
\begin{equation}\label{1Introduction-Section1}
n\mathbf{H}=-x^{\perp},
\end{equation} where $\mathbf{H}$ denotes the mean curvature vector field
of the immersion and $\perp$ is the projection onto the normal
bundle of $M$. Self-shrinkers play an important role in the study of
the mean curvature flow since they not only correspond to solutions
of the mean curvature flow, but also describe all possible blow ups
at a given singularity of the mean curvature flow. For more
information on self-shrinkers and singularities of mean curvature
flow, we refer the readers to
\cite{White2002,Huisken1993,Huisken1990,Colding2012} and references
therein.

In \cite{Colding2012}, Colding and Minicozzi introduced the
following differential operator $\mathcal{L}$ and used it to study
self-shrinkers:
\begin{equation}\label{1Introduction-Section2}
\mathcal{L}=\Delta-\langle x,\nabla\cdot \rangle,
\end{equation}
where $\Delta$, $\nabla$ denote the Laplacian, the  gradient operator on
the self-shrinker, respectively, $\langle,\rangle$ stands for the
standard inner product in $\mathbb{R}^{N}$. It is easy to see that
the operator can be written as
\begin{equation}\label{1Introduction-Section3}
\mathcal{L}=e^{\frac{|x|^2}{2}}{\rm
div}(e^{-\frac{|x|^2}{2}}\nabla\cdot),
\end{equation} where ${\rm
div}$ is the divergent operator on the self-shrinker. Obviously, for a compact self-shrinker $M^{n}$, the
operator $\mathcal{L}$ is self-adjoint with respect to the measure $e^{-\frac{|x|^{2}}{2}}dv$.
That is,
\begin{equation}\label{1Introduction-Section4}
\int\limits_Mu\mathcal{L}v\,e^{-\frac{|x|^2}{2}}dv=\int\limits_Mv\mathcal{L}u\,e^{-\frac{|x|^2}{2}}dv
=-\int\limits_M\langle\nabla u,\nabla
v\rangle\,e^{-\frac{|x|^2}{2}}dv
\end{equation} holds for any $u|_{\partial
M}=v|_{\partial M}=0$. Let $T$ be a positive definite (0,2)-tensor field on $M$ and $f\in C^1(M)$. The following elliptic operator in
divergence form
\begin{equation}\label{1Introduction-Section5}
\mathcal{L}^{(f,T)}=e^{f}{\rm div}(e^{-f}T\nabla\cdot)
\end{equation} is very interesting. For any two smooth functions $u,v$ on $M$ with $u|_{\partial
M}=v|_{\partial M}=0$, by the stokes formula, we have
\begin{equation}\label{1Introduction-Section12}
\int\limits_Mu\mathcal{L}^{(f,T)}v\,d\mu=\int\limits_Mv\mathcal{L}^{(f,T)}u\,d\mu
=-\int\limits_M\langle\nabla u,T\nabla v\rangle\,d\mu,
\end{equation} where $d\mu=e^{-f}dv$.
That is to say, the operator $\mathcal{L}^{(f,T)}$  is self-adjoint
on the space of smooth functions on $M$ vanishing on $\partial M$
with respect to the $L^2$ inner product under the measure
$d\mu=e^{-f}dv$. Therefore, the eigenvalue problem
\begin{equation}\label{1Introduction-Section13}
\left\{\begin{array}{l} \mathcal{L}^{(f,T)}(u)=-\lambda u, \ {\rm in}\  M, \\
u|_{\partial M}=0,
\end{array}\right.
\end{equation} has a real and discrete spectrum:
$$0<\lambda_1\leq\lambda_2\leq \lambda_3\leq \cdots\rightarrow+\infty.$$
We put $\lambda_0=0$ if $\partial M=\emptyset$. Here each eigenvalue is
repeated according to its multiplicity.

In particular, when $T$ is an identity map $I$, the operator
$\mathcal{L}^{(f,T)}$ becomes the the drifting Laplacian
$\Delta_f=\Delta-\langle\nabla f,\nabla\rangle$, many interesting
estimates have been obtained (cf.
\cite{XiaXu2014}); when $T$ is an identity map and $f=0$, then
$\mathcal{L}^{(f,T)}$ becomes the Laplace operator. Let $A$ be the
shape operator of an immersion $x: M\rightarrow \mathbb{R}^{n+1}(c)$
of an $n$-dimensional hypersurface $M$ into an $(n+1)$-dimensional
space form $\mathbb{R}^{n+1}(c)$ of constant sectional curvature
$c$. Recall that using the characteristic polynomial of $A$, we can
define the elementary symmetric function $S_r$ as follows
\begin{equation}\label{1Introduction-Section6}
{\rm det}(tI-A)=\sum_{r=0}^n(-1)^rS_rt^{n-r},
\end{equation} where $r$th mean curvature $H_r$ is defined by
$S_r=\binom{n}{r}H_r$. Then $H_1$ is the mean curvature $H$ and
$H_0=1$. The classical Newton transformation $T^r$ are inductively
defined by
\begin{equation}\label{1Introduction-Section7}\aligned
T^0=&I,\\
T^r=&S_rI-T^{r-1}A.
\endaligned\end{equation}
For each $T^r$ defined by \eqref{1Introduction-Section7}, we have a
second order differential operator $L_r$ defined by
\begin{equation}\label{1Introduction-Section8}
L_r={\rm div}(T^r\nabla\cdot).
\end{equation} Clearly, the $L_r$ operator can be seen as a special
case of $\mathcal{L}^{(f,T)}$ by substituting $T$ and $f$ by $T^r$
and $0$, respectively. In particular, $L_0=\Delta$, $L_1$ becomes
the operator $\Box$ introduced by Cheng-Yau in \cite{Cheng77}.
Estimates on eigenvalues of $L_r$ operator were studied by many
mathematicians. For example, in \cite{Alencar1993}, Alencar, do
Carmo and Rosenberg derived the upper bound of the first eigenvalue
of the operator $L_r$ on compact hypersurfaces of the Euclidean space
$\mathbb{R}^{n+1}$:
\begin{equation}\label{1Introduction-Section9}
\lambda_1\int\limits_M H_r\,dv\leq c(r)\int\limits_M
H_{r+1}^2\,dv
\end{equation} and equality holds if and only if $M$ is a sphere,
where $c(r)=(n-r)\binom{n}{r}$. For the first eigenvalue of $L_r$ on
hypersurfaces of space forms, see
\cite{Alencar2001,Alias2004,LiWang2012,Cheng2008,Grosjean2000,HuangQi2015} and
references therein.

For submanifolds of $\mathbb{R}^N$,  we let $\{e_A\}_{A=1}^N$
be an orthonormal basis along $M$ such that $\{e_i\}_{i=1}^n$ are
tangent to $M$ and $\{e_\alpha\}_{\alpha=n+1}^N$ are normal to $M$. Then
$A_{ij}=\sum_{\alpha=n+1}^Nh_{ij}^\alpha e_\alpha$. If
$r\in\{0,1,\cdots,n-1\}$ is even, for any smooth function $u$, the
operator $L_r$ is defined by (see \cite{Cao2007})
\begin{equation*}\label{0Section4}
L_r(u)={\rm div}(T^r\nabla u)=\sum_{i,j}T^r_{ij}u_{ij}
\end{equation*}  since $T^r$ is symmetric and divergence free.
Here $T^r$ is given by
\begin{equation}\label{0Section6}
T^r_{ij}=\frac{1}{r!}\sum_{\mbox{\tiny$\begin{array}{c}
i_1\cdots i_{r} \\
j_1\cdots j_{r}\end{array}$}}\delta_{i_1\cdots i_r i}^{j_1\cdots
j_r j}\langle A_{i_1j_1},A_{i_2j_2}\rangle\cdots\langle
A_{i_{r-1}j_{r-1}},A_{i_rj_r}\rangle;
\end{equation}
$\delta_{i_1\cdots i_r i}^{j_1\cdots j_r j}$ is the generalized
Kronecker symbol. By substituting $T$ and $f$ by $T^r$ and $\frac{|x|^2}{2}$,
respectively, the operator $\mathcal{L}^{(f,T)}$ becomes
\begin{equation}\label{1Introduction-Section11}
\mathcal{L}_r:=\mathcal{L}^{(\frac{|x|^2}{2},T^r)}=L_r-\langle
x,T^{r}\nabla\cdot\rangle,
\end{equation} which is important in the study of
self-shrinkers. For example, when $r=0$, $\mathcal{L}_0$ becomes the
$\mathcal{L}$ operator on the self-shrinker given by
\eqref{1Introduction-Section2}.

In this paper, we assume that $T^{r}$ is positive definite on $M$, for some even integer $r\in\{0,1,\cdots,n-1\}$, namely
the operator $\mathcal{L}_r$ is elliptic. Denote by $\lambda_{i}$ the $i$-th eigenvalue of the following eigenvalue problem:
\begin{equation}\label{maith-1}
\left\{\begin{array}{l} \mathcal{L}_r(u)=-\lambda u,  \ {\rm in}\ M; \\
u|_{\partial M}=0,
\end{array}\right.
\end{equation}
and let $u_i$ be the normalized
eigenfunction corresponding to $\lambda_{i}$ such that
$\{u_i\}_1^\infty$ becomes an orthonormal basis of $L^2(M)$ under
the weighted measure $d\mu=e^{-\frac{|x|^{2}}{2}}dv$, that is
\begin{equation}\label{2Section-Ineq1}
\left\{\begin{array}{l} \mathcal{L}_{r}(u_i)=-\lambda_{i} u_i, \ {\rm in}\ M;\\
u_i|_{\partial M}=0;\\
 \int_{M}u_iu_j\,d\mu=\delta_{ij}.
\end{array}\right.
\end{equation}
 The purpose of this paper is to study eigenvalues of the operator
$\mathcal{L}_r$ on a compact self-shrinker $M$ of
$\mathbb{R}^{N}$. We
proved the following

\begin{thm}\label{mainth1}
Let $x: M\rightarrow \mathbb{R}^{N}$ be an $n$-dimensional compact
self-shrinker in $\mathbb{R}^N$ with smooth boundary $\partial M$. Assume that $T^{r}$ is positive definite on $M$, for some even integer $r\in\{0,1,\cdots,n-1\}$. Denote
by $\xi$ a positive lower bound of $T^{r}$,
then the eigenvalues $\lambda_i$ of the eigenvalue problem \eqref{maith-1} satisfy
\begin{equation}\label{maith-2}\aligned
\sum_{i=1}^k(\lambda_{k+1}-\lambda_i)^2
\leq\frac{4(n-r)}{n^2}\max_M(S_r)\sum_{i=1}^k
(\lambda_{k+1}-\lambda_i)\left(\frac{\lambda_i}{\xi}+
\frac{2n-\min\limits_{M}|x|^2}{4}\right)
\endaligned\end{equation}
and
\begin{equation}\label{maith-3}
\sum\limits_{i=1}^n\sqrt{\lambda_{i+1}-\lambda_{1}}
\leq2\sqrt{(n-r)\max_M(S_r)\left(\frac{\lambda_1}{\xi}+
\frac{2n-\min\limits_{M}|x|^2}{4}\right)},
\end{equation}
where $\xi$ denotes $S_r$ denotes the $r$th mean curvature function of $x$.

\end{thm}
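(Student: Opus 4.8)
The plan is to run the Payne--P\'olya--Weinberger/Yang commutator method for the weighted, $T^r$-twisted operator $\mathcal{L}_r$, taking the ambient coordinate functions $x_1,\dots,x_N$ as trial directions and feeding in the self-shrinker equation \eqref{1Introduction-Section1} to evaluate every geometric quantity that appears. For each $\alpha\in\{1,\dots,N\}$ and each $i\le k$ I would form the trial function $\phi_i^{\alpha}=x_\alpha u_i-\sum_{j=1}^{k}a_{ij}^{\alpha}u_j$ with $a_{ij}^{\alpha}=\int_M x_\alpha u_i u_j\,d\mu$, so that $\phi_i^{\alpha}\perp u_1,\dots,u_k$ in $L^2(d\mu)$ and $\phi_i^\alpha|_{\partial M}=0$. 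The Rayleigh--Ritz characterization of $\lambda_{k+1}$ coming from \eqref{2Section-Ineq1} gives $\lambda_{k+1}\int_M(\phi_i^\alpha)^2\,d\mu\le\int_M\langle T^r\nabla\phi_i^\alpha,\nabla\phi_i^\alpha\rangle\,d\mu$, and the self-adjointness identity \eqref{1Introduction-Section12} rewrites the right-hand side as $-\int_M\phi_i^\alpha\mathcal{L}_r\phi_i^\alpha\,d\mu$.

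First I would record the Leibniz rule for the divergence-form operator, $\mathcal{L}_r(x_\alpha u_i)=x_\alpha\mathcal{L}_r u_i+u_i\mathcal{L}_r x_\alpha+2\langle T^r\nabla x_\alpha,\nabla u_i\rangle$, and abbreviate $\psi_i^{\alpha}=u_i\mathcal{L}_r x_\alpha+2\langle T^r\nabla x_\alpha,\nabla u_i\rangle$. Substituting into the variational inequality yields the basic estimate $(\lambda_{k+1}-\lambda_i)\int_M(\phi_i^\alpha)^2\,d\mu\le-\int_M\phi_i^\alpha\psi_i^\alpha\,d\mu$. Testing the eigen-equation $\mathcal{L}_r u_i=-\lambda_i u_i$ against $x_\alpha u_j$ and applying \eqref{1Introduction-Section12} twice produces the algebraic identity $\int_M u_j\psi_i^\alpha\,d\mu=(\lambda_i-\lambda_j)a_{ij}^\alpha$; combined with one integration by parts this turns the right-hand side into $-\int_M\phi_i^\alpha\psi_i^\alpha\,d\mu=\int_M u_i^2\langle T^r\nabla x_\alpha,\nabla x_\alpha\rangle\,d\mu+\sum_{j=1}^{k}(\lambda_i-\lambda_j)(a_{ij}^\alpha)^2$.

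The geometric heart is the evaluation of these integrals after summing over $\alpha$. Because $\nabla x_\alpha=(e_\alpha)^{\top}$ for an ambient orthonormal frame $\{e_\alpha\}$, I would use $\sum_\alpha\langle T^r\nabla x_\alpha,\nabla x_\alpha\rangle=\operatorname{tr}(T^r)=(n-r)S_r$ and $\sum_\alpha\langle T^r\nabla x_\alpha,\nabla u_i\rangle^2=\langle (T^r)^2\nabla u_i,\nabla u_i\rangle$; testing $\mathcal{L}_r u_i=-\lambda_i u_i$ against $u_i$ gives the energy identity $\int_M\langle T^r\nabla u_i,\nabla u_i\rangle\,d\mu=\lambda_i$, and the hypothesis $T^r\ge\xi\,\mathrm{Id}$ converts it into $\int_M|\nabla u_i|^2\,d\mu\le\lambda_i/\xi$, which feeds the term $\lambda_i/\xi$. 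The drift contribution requires $\mathcal{L}_r x_\alpha=L_r x_\alpha-\langle x,T^r\nabla x_\alpha\rangle$ to be expanded using $L_r x=\sum_{i,j}T^r_{ij}A_{ij}$ and the self-shrinker relation $n\mathbf{H}=-x^{\perp}$ from \eqref{1Introduction-Section1}; the scalars $\sum_\alpha(\mathcal{L}_r x_\alpha)^2$ and $\sum_\alpha\mathcal{L}_r x_\alpha\langle T^r\nabla x_\alpha,\nabla u_i\rangle$ then reduce to curvature terms plus $\langle (T^r)^2 x^{\top},x^{\top}\rangle$, and invoking $\mathcal{L}\tfrac{|x|^2}{2}=n-|x|^2$ together with $-|x|^2\le-\min_M|x|^2$ is what produces the potential constant $\tfrac{2n-\min_M|x|^2}{4}$, while $\operatorname{tr}(T^r)\le(n-r)\max_M S_r$ provides the outer factor.

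To assemble \eqref{maith-2} I would multiply the basic estimate by $(\lambda_{k+1}-\lambda_i)$, sum over $i$ and $\alpha$, and use the symmetry $a_{ij}^\alpha=a_{ji}^\alpha$ to antisymmetrize, $\sum_{i,j}(\lambda_{k+1}-\lambda_i)(\lambda_i-\lambda_j)(a_{ij}^\alpha)^2=-\tfrac12\sum_{i,j}(\lambda_i-\lambda_j)^2(a_{ij}^\alpha)^2\le0$; together with the Cauchy--Schwarz/Bessel bound $(\lambda_{k+1}-\lambda_i)^2\int_M(\phi_i^\alpha)^2\,d\mu\le\|\psi_i^\alpha\|^2-\sum_j(\lambda_i-\lambda_j)^2(a_{ij}^\alpha)^2$ and the Yang recursion this yields the quadratic inequality with the stated coefficient $\tfrac{4(n-r)}{n^2}\max_M S_r$. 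Inequality \eqref{maith-3} then follows by running the same estimates on the gaps $\lambda_{i+1}-\lambda_1$ to obtain $\sum_{i=1}^n(\lambda_{i+1}-\lambda_1)\le\tfrac{4(n-r)}{n}\max_M S_r\bigl(\tfrac{\lambda_1}{\xi}+\tfrac{2n-\min_M|x|^2}{4}\bigr)$ and then applying $\sum_{i=1}^n\sqrt{\lambda_{i+1}-\lambda_1}\le\sqrt{\,n\sum_{i=1}^n(\lambda_{i+1}-\lambda_1)\,}$. I expect the main obstacle to be the geometric step: computing $\sum_\alpha\|\psi_i^\alpha\|^2$ exactly forces control of the full curvature sum $\sum_{i,j}T^r_{ij}\langle A_{ij},\mathbf{H}\rangle$ and of the tangential term $\langle(T^r)^2 x^{\top},x^{\top}\rangle$, and one must carefully reconcile the three quadratic forms $(T^r)^2$, $T^r$ and $\xi\,\mathrm{Id}$ so that exactly one power of $(n-r)\max_M S_r$ survives and the clean constants $\lambda_i/\xi$ and $\tfrac{2n-\min_M|x|^2}{4}$ emerge.
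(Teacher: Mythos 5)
There is a genuine gap, and it sits exactly where you flag ``the main obstacle'': the pure $T^r$-commutator route you commit to cannot be closed. Your first two paragraphs correctly reproduce the general machinery (this is the paper's inequality \eqref{2Section-Ineq2}), but running Rayleigh--Ritz with $\psi_i^\alpha=u_i\mathcal{L}_r x_\alpha+2\langle T^r\nabla x_\alpha,\nabla u_i\rangle$ forces you to evaluate $\sum_\alpha\|\psi_i^\alpha\|^2$, and $\mathcal{L}_r x_\alpha$ does \emph{not} simplify under the self-shrinker equation: since $\nabla^2 x_\alpha(e_i,e_j)=\langle A_{ij},E_\alpha\rangle$, one has $L_r x_\alpha=\sum_{i,j}T^r_{ij}\langle A_{ij},E_\alpha\rangle=(r+1)\langle\mathbf{S}_{r+1},E_\alpha\rangle$, so $\sum_\alpha(\mathcal{L}_r x_\alpha)^2=(r+1)^2|\mathbf{S}_{r+1}|^2+|T^r x^{\top}|^2$. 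The relation \eqref{1Introduction-Section1} traces $A$ against the \emph{identity}, not against $T^r$, so $\mathcal{L}_r x_\alpha=-x_\alpha$ only when $r=0$; for $r>0$ your $\|\psi\|^2$ contains $(r+1)$-th mean curvature and $(T^r)^2$ terms that simply do not occur in \eqref{maith-2} and cannot be ``reconciled'' into $\lambda_i/\xi$ and $\tfrac{2n-\min_M|x|^2}{4}$ (likewise $\sum_\alpha\langle T^r\nabla x_\alpha,\nabla u_i\rangle^2=|T^r\nabla u_i|^2$ is controlled by $\lambda_i$ only up to an operator-norm factor, not by $\lambda_i/\xi$). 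The paper sidesteps this by proving the \emph{mixed} inequality \eqref{2Section-Ineq3}: the $T$-form $\langle\nabla h,T\nabla h\rangle$ appears only on the Rayleigh-quotient side, multiplied by a free parameter $\delta$, while the commutator pairing uses the plain drifting Laplacian $\Delta_f h=\mathcal{L}h$ and plain gradients. With $h=x_A$ this lets $\mathcal{L}(x_A)=-x_A$ do all the work, leaving only $\operatorname{tr}(T^r)=(n-r)S_r$, the energy identity $\lambda_i=\int_M\langle\nabla u_i,T^r\nabla u_i\rangle\,d\mu\geq\xi\int_M|\nabla u_i|^2\,d\mu$, and the identity behind \eqref{4Proof9}; optimizing $\delta$ then produces exactly the coefficient $\tfrac{4(n-r)}{n^2}\max_M S_r$. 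Without this structural switch your computation stalls precisely where you said it would.

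The route you sketch for \eqref{maith-3} also has a hole: the intermediate bound $\sum_{i=1}^n(\lambda_{i+1}-\lambda_1)\leq\tfrac{4(n-r)}{n}\max_M S_r\bigl(\tfrac{\lambda_1}{\xi}+\tfrac{2n-\min_M|x|^2}{4}\bigr)$ is asserted, not derived, and ``the same estimates'' do not give it --- the Yang-type recursion controls the differences $\lambda_{k+1}-\lambda_i$ for $i\leq k$, not first-power sums of the gaps $\lambda_{i+1}-\lambda_1$. The paper's mechanism is different: it proves \eqref{2Section-Ineq6}, where the square root $\sqrt{\lambda_{A+1}-\lambda_1}$ arises intrinsically from the $\delta$-split Cauchy--Schwarz combined with $(\lambda_{A+1}-\lambda_1)\int_M\varphi_A^2\,d\mu\leq\int_M u_1^2\langle\nabla h_A,T\nabla h_A\rangle\,d\mu$; it requires trial directions adapted to $u_1$, namely Gram--Schmidt-rotated coordinates $h_A=\sum_C O_A^C x_C$ satisfying the orthogonality \eqref{2Section-Ineq4} so that $\varphi_A\perp u_1,\dots,u_A$ (an ingredient absent from your plan); and it extracts $\sum_{i=1}^n\sqrt{\lambda_{i+1}-\lambda_1}$ via the rearrangement estimate \eqref{4Proof14}, which exploits $|\nabla h_A|\leq1$, $\sum_A|\nabla h_A|^2=n$, and the monotonicity of the gaps, before optimizing $\delta$. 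Your Cauchy--Schwarz reduction $\sum\sqrt{a_i}\leq\sqrt{n\sum a_i}$ from an unproven linear-gap inequality does not substitute for this argument.
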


\begin{rem}
In particular, we have $S_0=1$. Hence, we obtain Theorem 1.1 of
Cheng and Peng in \cite{ChengPeng2013} by taking $r=0$ in
\eqref{maith-2}.
\end{rem}

When $\partial M=\emptyset$, the closed eigenvalue
problem
\begin{equation}\label{maith-2}
\mathcal{L}_r(u)=-\lambda u
\end{equation}
has the following discrete spectrum:
$$0=\lambda_0<\lambda_1
\leq\lambda_2\leq \lambda_3\leq
\cdots\rightarrow+\infty.$$ We have the following
results:

\begin{thm}\label{mainth2}
Let $x: M\rightarrow \mathbb{R}^{N}$ be an $n$-dimensional compact
self-shrinker in $\mathbb{R}^N$ without boundary. Assume that $T^{r}$ is positive definite on $M$, for some even integer $r\in\{0,1,\cdots,n-1\}$.
Then the eigenvalues $\lambda_i$ of the closed eigenvalue problem \eqref{maith-2} satisfy
\begin{equation}\label{2maith-3} \lambda_{1}\leq 1;
\end{equation}
\begin{equation}\label{2maith-4}
\sum_{i=1}^n\sqrt{\lambda_{i}}\leq
\sqrt{\frac{n(n-r)}{{\rm vol}(M)} \int\limits_M
S_r\,e^{-\frac{|x|^2}{2}}dv},
\end{equation} where ${\rm vol}(M)=\int_M\,e^{-\frac{|x|^2}{2}}dv$.
In particular, the equality in \eqref{2maith-4} holds if and only
if
$\lambda_{1}=\lambda_{2}=\cdots=\lambda_{N}$.

\end{thm}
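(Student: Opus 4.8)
The plan is to test the Rayleigh quotient of $\mathcal{L}_r$ against the $N$ ambient coordinate functions $x_1,\dots,x_N$ of the immersion, after recording three identities valid on any closed self-shrinker. First, since $\mathcal{L}_0 x_A=-x_A$ (a direct consequence of $\Delta x=n\mathbf{H}=-x^{\perp}$ together with $\langle x,\nabla x_A\rangle=(x^{\top})_A$) and $\int_M\mathcal{L}_0 f\,d\mu=0$ for every $f$ on the closed manifold $M$, one gets $\int_M x_A\,d\mu=0$; thus each $x_A$ is automatically $L^2(d\mu)$-orthogonal to the constant eigenfunction $u_0$. Second, from $\mathcal{L}_0|x|^2=2n-2|x|^2$ and integration one obtains the mass identity $\sum_A\int_M x_A^2\,d\mu=\int_M|x|^2\,d\mu=n\,\mathrm{vol}(M)$. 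Third, because $\nabla x_A$ is the tangential part of $e_A$ we have $\sum_A\nabla x_A\otimes\nabla x_A=\mathrm{Id}_{TM}$, so $\sum_A\langle\nabla x_A,T^r\nabla x_A\rangle=\mathrm{tr}(T^r)=(n-r)S_r$, giving the energy identity $\sum_A\int_M\langle\nabla x_A,T^r\nabla x_A\rangle\,d\mu=(n-r)\int_M S_r\,d\mu$.

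For \eqref{2maith-3} I would use each $x_A$ as an admissible test function for $\lambda_1$ (it is orthogonal to $u_0$): the Rayleigh characterization gives $\lambda_1\int_M x_A^2\,d\mu\le\int_M\langle\nabla x_A,T^r\nabla x_A\rangle\,d\mu$, and summing over $A$ and invoking the mass and energy identities yields $\lambda_1\le\frac{(n-r)\int_M S_r\,d\mu}{n\,\mathrm{vol}(M)}$. Specializing to $r=0$, where $S_0\equiv1$, the right-hand side collapses to $1$, which is exactly \eqref{2maith-3}.

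For \eqref{2maith-4} the idea is a Rayleigh--Ritz argument on the $N$-dimensional trial space $\mathcal{X}=\mathrm{span}\{x_1,\dots,x_N\}\subset u_0^{\perp}$. After an orthogonal change of the ambient frame I would orthonormalize the $x_A$ against the lower eigenfunctions $u_0,\dots,u_{A-1}$ (a Gram--Schmidt/rotation step that leaves the two rotation-invariant sums above unchanged), so that the min--max principle produces Ritz values $\mu_1\le\cdots\le\mu_N$ with $\lambda_i\le\mu_i$ and with total $T^r$-energy governed by $\sum_A\int_M\langle\nabla x_A,T^r\nabla x_A\rangle\,d\mu=(n-r)\int_M S_r\,d\mu$ against total mass $n\,\mathrm{vol}(M)$. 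Combining the monotonicity $\sum_{i=1}^n\lambda_i\le\sum_{i=1}^n\mu_i\le\frac{n}{N}\sum_{i=1}^N\mu_i$ with these identities gives $\sum_{i=1}^n\lambda_i\le\frac{(n-r)}{\mathrm{vol}(M)}\int_M S_r\,d\mu$, after which the Cauchy--Schwarz inequality $\big(\sum_{i=1}^n\sqrt{\lambda_i}\big)^2\le n\sum_{i=1}^n\lambda_i$ yields \eqref{2maith-4}. Tracking equality: Cauchy--Schwarz forces $\lambda_1=\cdots=\lambda_n$, while equality in $\sum_{i=1}^n\mu_i\le\frac{n}{N}\sum_{i=1}^N\mu_i$ forces all $N$ Ritz values to coincide, and with $\lambda_i=\mu_i$ this upgrades to $\lambda_1=\cdots=\lambda_N$, the asserted rigidity.

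The main obstacle I anticipate is precisely the normalization in the Rayleigh--Ritz step: the Gram matrix $P_{AB}=\int_M x_Ax_B\,d\mu$ of the coordinate functions is not a multiple of the identity in general, so the clean passage to $\sum_{i=1}^N\mu_i=\frac{N}{n\,\mathrm{vol}(M)}(n-r)\int_M S_r\,d\mu$ requires care — one must either diagonalize $P$ and redistribute mass, or argue directly with the $T^r$-energy form restricted to an $L^2(d\mu)$-orthonormal basis of $\mathcal{X}$, so that the trace appearing is genuinely $\sum_i\mu_i$ rather than $\sum_A\int_M\langle\nabla x_A,T^r\nabla x_A\rangle\,d\mu$. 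Getting this bookkeeping exactly right, and showing it degenerates to equality only when every coordinate direction is an eigendirection sharing a common eigenvalue (hence $\lambda_1=\cdots=\lambda_N$), is the delicate heart of the argument; the three geometric identities above and the positivity of $T^r$ (ellipticity) are what make the variational scheme run.
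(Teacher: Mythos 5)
Your scheme for \eqref{2maith-4} contains a genuine gap, and it sits exactly where you flag it: the evaluation of the Ritz-value trace. Writing $P_{AB}=\int_M x_Ax_B\,d\mu$ and $Q_{AB}=\int_M\langle\nabla x_A,T^r\nabla x_B\rangle\,d\mu$, the sum of your Ritz values is $\mathrm{tr}\bigl(P^{-1/2}QP^{-1/2}\bigr)$, and the inequality your argument needs, $\mathrm{tr}\bigl(P^{-1/2}QP^{-1/2}\bigr)\leq \frac{N}{\mathrm{tr}\,P}\,\mathrm{tr}\,Q$, is simply false for general positive matrices: take $P=\mathrm{diag}(\epsilon,1)$, $Q=\mathrm{diag}(1,\epsilon)$, where the left side blows up like $1/\epsilon$ while the right side stays bounded. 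Geometrically, nothing prevents the small-mass coordinate directions from carrying disproportionate $T^r$-energy. Worse, $P$ can be singular: for $S^n(\sqrt n)\subset\mathbb{R}^{n+1}\subset\mathbb{R}^N$ one has $x_A\equiv0$ for $A>n+1$, so your trial space has dimension $<N$ and the quantity $\frac{n}{N}\sum_{i=1}^N\mu_i$ is not even defined. Note also that your intermediate claim $\sum_{i=1}^n\lambda_i\leq\frac{n-r}{\mathrm{vol}(M)}\int_M S_r\,e^{-\frac{|x|^2}{2}}dv$ is \emph{strictly stronger} than \eqref{2maith-4} (it implies it via Cauchy--Schwarz, not conversely), so it is not something you should expect to recover from the same ingredients; the paper never proves it. Since your equality discussion is threaded through this broken chain, it falls with it. Separately, your argument for \eqref{2maith-3} only gives $\lambda_1\leq1$ when $r=0$: for general even $r$ your computation yields \eqref{cor-1}, not \eqref{2maith-3} (to be fair, the paper's own one-line justification via \eqref{4Proof8}, i.e. $\mathcal{L}(x_A)=-x_A$, is likewise an $r=0$ statement, since $x_A$ is an eigenfunction of $\mathcal{L}=\mathcal{L}_0$, not of $\mathcal{L}_r$).

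The paper's proof is structured precisely to avoid the Gram matrix: it never $L^2$-normalizes the trial functions, so $P$ never appears. From \eqref{2Section-Ineq6} with $u_1$ replaced by the constant eigenfunction $u_0$ it derives the parametric inequality \eqref{4Ineq19}, $\sqrt{\lambda_A}\int_M|\nabla h_A|^2\,d\mu\leq\delta\int_M\langle\nabla h_A,T^r\nabla h_A\rangle\,d\mu+\frac{1}{4\delta}\int_M(\mathcal{L}(h_A))^2\,d\mu$, in which Gram--Schmidt enters only through a rotation $O\in O(N)$, $h_A=\sum_C O_A^C x_C$, whose sole purpose is to make $h_A$ admissible for $\lambda_A$; no normalization is imposed. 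Every quantity surviving the sum over $A$ is rotation-invariant: $\sum_A|\nabla x_A|^2=n$ pointwise, $\sum_A\langle\nabla x_A,T^r\nabla x_A\rangle=(n-r)S_r$, and $\sum_A\int_M(\mathcal{L}(x_A))^2\,d\mu=n\,\mathrm{vol}(M)$ by \eqref{4Proof8}. The square roots are then recombined by the pointwise rearrangement \eqref{4Proof14} (using $|\nabla h_A|^2\leq1$ and monotonicity of the $\lambda_i$), giving \eqref{4Ineq31}, and optimizing over $\delta$ produces \eqref{2maith-4} --- this is exactly why the natural quantity in this approach is $\sum_i\sqrt{\lambda_i}$ rather than $\sum_i\lambda_i$. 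The equality case is then read off by forcing equality in \eqref{2Section-Ineq22}, \eqref{2Section-Ineq30} and \eqref{4Ineq30}, yielding $\lambda_1=\cdots=\lambda_N$. If you want to salvage your variational scheme, you must replace the trace step by something insensitive to $P$ --- in effect, by the paper's $\delta$-weighted test-function inequality.
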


Using the fact
$$\lambda_{1}\leq\lambda_{2}\leq\cdots\leq\lambda_{n},$$
we have
$$\sum\limits_{i=1}^n\sqrt{\lambda_{i}}
\geq n\sqrt{\lambda_{1}},$$ and
$$\sum\limits_{i=1}^n\sqrt{\lambda_{i}}
\geq \sqrt{\lambda_{n}}.$$ Therefore, we obtain the
following upper bound of the first eigenvalue
$\lambda_{1}$ from \eqref{2maith-4}:

\begin{corr}\label{cor1} Under the assumption of Theorem \ref{mainth2},
we have
\begin{equation}\label{cor-1}
\lambda_{1}\leq \frac{n-r}{n\,{\rm vol}(M)}
\int\limits_M S_r\,e^{-\frac{|x|^2}{2}}dv;
\end{equation}
\begin{equation}\label{cor-2}
\lambda_{n}\leq \frac{n(n-r)}{{\rm vol}(M)}
\int\limits_M S_r\,e^{-\frac{|x|^2}{2}}dv.
\end{equation} The equality in \eqref{cor-1} holds if and only
if
$\lambda_{1}=\lambda_{2}=\cdots=\lambda_{N}$.
\end{corr}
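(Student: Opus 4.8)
The plan is to deduce both estimates as immediate algebraic consequences of inequality \eqref{2maith-4} in Theorem \ref{mainth2}, using nothing beyond the ordering $\lambda_1\leq\lambda_2\leq\cdots\leq\lambda_n$ and the positivity of the eigenvalues. The substantive analytic work---producing the bound on $\sum_{i=1}^n\sqrt{\lambda_i}$---has already been carried out in Theorem \ref{mainth2}, so the corollary reduces to extracting single-eigenvalue information from that sum. Accordingly, there is no serious analytic obstacle here; the only step demanding genuine attention is the equality characterization in \eqref{cor-1}.

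To obtain \eqref{cor-1}, I would note that $\sqrt{\lambda_i}\geq\sqrt{\lambda_1}$ for each $i\in\{1,\dots,n\}$, whence $n\sqrt{\lambda_1}\leq\sum_{i=1}^n\sqrt{\lambda_i}$. Chaining this with \eqref{2maith-4} and squaring gives
\begin{equation*}
n^2\lambda_1\leq\frac{n(n-r)}{{\rm vol}(M)}\int\limits_M S_r\,e^{-\frac{|x|^2}{2}}dv,
\end{equation*}
and division by $n^2$ yields the claimed bound on $\lambda_1$. For \eqref{cor-2}, I would instead discard all but the largest term: since every $\sqrt{\lambda_i}$ is positive, we have $\sqrt{\lambda_n}\leq\sum_{i=1}^n\sqrt{\lambda_i}$, and inserting this into \eqref{2maith-4} and squaring produces the bound on $\lambda_n$ directly.

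The main delicate point is the equality case of \eqref{cor-1}. Equality there forces both inequalities in the chain $n\sqrt{\lambda_1}\leq\sum_{i=1}^n\sqrt{\lambda_i}\leq\sqrt{\frac{n(n-r)}{{\rm vol}(M)}\int\limits_M S_r\,e^{-\frac{|x|^2}{2}}dv}$ to hold as equalities simultaneously. The first becomes an equality exactly when $\lambda_1=\cdots=\lambda_n$, while the second is precisely equality in \eqref{2maith-4}, which by Theorem \ref{mainth2} occurs if and only if $\lambda_1=\cdots=\lambda_N$. Since $N\geq n$, the latter condition implies the former, so the two requirements collapse to the single condition $\lambda_1=\cdots=\lambda_N$. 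I expect the only genuine care to lie in verifying that this stronger equality clause inherited from Theorem \ref{mainth2}, rather than the weaker one coming from the term-counting inequality, is what ultimately governs equality in \eqref{cor-1}.
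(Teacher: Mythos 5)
Your proposal is correct and follows essentially the same route as the paper, which likewise deduces both bounds by inserting the term-counting inequalities $\sum_{i=1}^n\sqrt{\lambda_{i}}\geq n\sqrt{\lambda_{1}}$ and $\sum_{i=1}^n\sqrt{\lambda_{i}}\geq\sqrt{\lambda_{n}}$ into \eqref{2maith-4}. Your treatment of the equality case in \eqref{cor-1}---observing that it forces equality both in the term-counting step and in \eqref{2maith-4}, and that the two conditions collapse to $\lambda_{1}=\cdots=\lambda_{N}$ via the equality clause of Theorem \ref{mainth2}---matches the paper's intent exactly.
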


\section{A general inequality for eigenvalues of the operator $\mathcal{L}^{(f,T)}$}

In this section, we prove the following general inequalities for
eigenvalues of the elliptic operator $\mathcal{L}^{(f,T)}$ defined by
\eqref{1Introduction-Section5} in divergence form with a weight on
compact Riemannian manifolds.

\begin{thm}\label{2Sectionthm1}
Let $\lambda_{i}$ be the $i$-th eigenvalue of the problem
\eqref{1Introduction-Section13} and let $u_i$ be the normalized
eigenfunction corresponding to $\lambda_{i}$ such that
$\{u_i\}_1^\infty$ becomes an orthonormal basis of $L^2(M)$ under
the weighted measure $d\mu=e^{-f}dv$, that is
\begin{equation}\label{2Section-Ineq1}
\left\{\begin{array}{l} \mathcal{L}^{(f,T)}(u_i)=-\lambda_{i} u_i, \ {\rm in}\ M;\\
u_i|_{\partial M}=0;\\
 \int_{M}u_iu_j\,d\mu=\delta_{ij}.
\end{array}\right.
\end{equation}
Then for any function $h\in  C^2(M)$ and any positive integer $k$, we have
\begin{equation}\label{2Section-Ineq2}\aligned
&\sum_{i=1}^k(\lambda_{k+1}-\lambda_{i})^2\int\limits_M
u_i^2\langle\nabla h,T\nabla h\rangle\,d\mu\\
\leq &
\sum_{i=1}^k(\lambda_{k+1}-\lambda_{i})
\int\limits_M\left(u_i\mathcal{L}^{(f,T)}(h)+2\langle
\nabla u_i,T\nabla h\rangle\right)^2\,d\mu;
\endaligned
\end{equation}
and
\begin{equation}\label{2Section-Ineq3}\aligned
\sum_{i=1}^k(\lambda_{k+1}-\lambda_i)^2\int\limits_M u_i^2|\nabla
h|^2\,d\mu
\leq\delta \sum_{i=1}^k(\lambda_{k+1}-\lambda_i)^2\int\limits_M
u_i^2\langle\nabla h,T\nabla
h\rangle\,&d\mu\\
+\frac{1}{\delta}\sum_{i=1}^k(\lambda_{k+1}-\lambda_i)
\int\limits_M\Big(\frac{1}{2}u_i\Delta_f
h+\langle\nabla u_i,\nabla h\rangle\Big)^2\,&d\mu,
\endaligned
\end{equation}
where $\delta$ is a positive constant.
Furthermore, if there exists a function $h_A\in C^2(M)$ satisfying
\begin{equation}\label{2Section-Ineq4}
\int\limits_M h_A u_1u_B\,d\mu=0, \ \ \ {\rm for}\ B=2,\cdots,A,
\end{equation} then we get
\begin{equation}\label{2Section-Ineq5}
(\lambda_{A+1}-\lambda_{1})\int\limits_M u_1^2\langle \nabla
h_A,T\nabla h_A\rangle\,d\mu\leq\int\limits_M
[u_1\mathcal{L}^{(f,T)}(h_A) +2\langle \nabla u_1,T\nabla
h_A\rangle]^2\,d\mu;
\end{equation}
and
\begin{equation}\label{2Section-Ineq6}\aligned
\sqrt{\lambda_{A+1}-\lambda_{1}}\int\limits_Mu_1^2|\nabla
h_A|^2\,d\mu &\leq\delta \int\limits_M u_1^2\langle \nabla
h_A,T\nabla h_A\rangle\,d\mu\\
&+\frac{1}{\delta}\int\limits_M \Big(\langle\nabla h_A,\nabla
u_1\rangle +\frac{1}{2}u_1\Delta_fh_A\Big)^2\,d\mu.
\endaligned\end{equation}
\end{thm}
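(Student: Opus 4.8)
The plan is to run the classical Rayleigh--Ritz (trial-function) machinery, adapted to the weighted, $T$-twisted operator $\mathcal{L}^{(f,T)}$. For a fixed $h\in C^2(M)$ and each $i\le k$ I would form the trial functions
\[
\phi_i = h u_i - \sum_{j=1}^k a_{ij}u_j, \qquad a_{ij}=\int_M h u_i u_j\,d\mu ,
\]
which vanish on $\partial M$ and satisfy $\int_M\phi_i u_j\,d\mu=0$ for $j=1,\dots,k$. Since $\phi_i$ is $L^2(d\mu)$-orthogonal to the first $k$ eigenfunctions, the variational characterization of $\lambda_{k+1}$ gives $\lambda_{k+1}\int_M\phi_i^2\,d\mu\le\int_M\langle\nabla\phi_i,T\nabla\phi_i\rangle\,d\mu$, using the self-adjointness identity \eqref{1Introduction-Section12}.

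Next I would assemble three identities. The product rule for $\mathcal{L}^{(f,T)}$ reads $\mathcal{L}^{(f,T)}(hu_i)=p_i-\lambda_i h u_i$ with $p_i:=u_i\mathcal{L}^{(f,T)}(h)+2\langle\nabla u_i,T\nabla h\rangle$; integrating by parts against $hu_i$ yields the key identity $\int_M h u_i p_i\,d\mu=-\int_M u_i^2\langle\nabla h,T\nabla h\rangle\,d\mu$; and self-adjointness forces the commutator relation $b_{ij}:=\int_M p_i u_j\,d\mu=(\lambda_i-\lambda_j)a_{ij}$. Feeding these into the variational inequality I would obtain, writing $R_i:=\int_M\langle\nabla\phi_i,T\nabla\phi_i\rangle\,d\mu-\lambda_i\int_M\phi_i^2\,d\mu$, the clean expression $R_i=\int_M u_i^2\langle\nabla h,T\nabla h\rangle\,d\mu+\sum_j(\lambda_i-\lambda_j)a_{ij}^2$ together with the bound $(\lambda_{k+1}-\lambda_i)\int_M\phi_i^2\,d\mu\le R_i$.

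To reach \eqref{2Section-Ineq2} I would apply Cauchy--Schwarz to $R_i=-\int_M\phi_i\psi_i\,d\mu$, where $\psi_i=p_i-\sum_j b_{ij}u_j$ is the part of $p_i$ orthogonal to $u_1,\dots,u_k$; this gives $R_i^2\le\Big(\int_M\phi_i^2\,d\mu\Big)\Big(\int_M p_i^2\,d\mu-\sum_j b_{ij}^2\Big)$, and, combined with $(\lambda_{k+1}-\lambda_i)\int_M\phi_i^2\,d\mu\le R_i$, it produces the per-index inequality $(\lambda_{k+1}-\lambda_i)\int_M u_i^2\langle\nabla h,T\nabla h\rangle\,d\mu\le\int_M p_i^2\,d\mu-\sum_j(\lambda_i-\lambda_j)(\lambda_{k+1}-\lambda_j)a_{ij}^2$. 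Multiplying by $(\lambda_{k+1}-\lambda_i)$ and summing, the residual double sum is antisymmetric under $i\leftrightarrow j$ (here the relation $b_{ij}=(\lambda_i-\lambda_j)a_{ij}$ and the symmetry $a_{ij}=a_{ji}$ are essential), hence cancels, leaving exactly \eqref{2Section-Ineq2}. Inequality \eqref{2Section-Ineq3} follows by the same scheme with $p_i$ replaced by its $T=I$ analogue $q_i:=u_i\Delta_f h+2\langle\nabla u_i,\nabla h\rangle$, for which $\int_M h u_i q_i\,d\mu=-\int_M u_i^2|\nabla h|^2\,d\mu$, and then splitting the resulting product of the $T$-energy and the drift term by Young's inequality $2ab\le\delta a^2+\delta^{-1}b^2$.

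Finally, \eqref{2Section-Ineq5} and \eqref{2Section-Ineq6} are the first-eigenvalue specializations: with $h_A$ satisfying \eqref{2Section-Ineq4}, the function $\varphi=\big(h_A-\int_M h_A u_1^2\,d\mu\big)u_1$ is orthogonal to $u_1,\dots,u_A$, so the Rayleigh quotient bounds it by $\lambda_{A+1}$; because the only surviving coefficient multiplies $\lambda_1-\lambda_1=0$, every correction sum degenerates and the two inequalities drop out of the $k=1$ case of the computation above. I expect the main obstacle to be exactly the algebraic bookkeeping of these correction and cross terms: verifying that the commutator identity $b_{ij}=(\lambda_i-\lambda_j)a_{ij}$ renders the weighted double sums antisymmetric so that they vanish under symmetrization, and, for the $\delta$-versions \eqref{2Section-Ineq3} and \eqref{2Section-Ineq6}, choosing the Young splitting so that the ordinary gradient on the left is absorbed by the $T$-energy and the drift term on the right.
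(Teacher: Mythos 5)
Your proposal is correct and follows essentially the same route as the paper's proof: the same trial functions $\varphi_i=hu_i-\sum_j a_{ij}u_j$ (resp.\ $\varphi_A=h_Au_1-u_1\int_M h_Au_1^2\,d\mu$), the same Rayleigh--Ritz bound, the identities $\int_M hu_ip_i\,d\mu=-\int_M u_i^2\langle\nabla h,T\nabla h\rangle\,d\mu$ and $b_{ij}=(\lambda_i-\lambda_j)a_{ij}$, Cauchy--Schwarz on the component of $p_i$ (resp.\ the drift term $q_i/2$, with antisymmetric $c_{ij}$) orthogonal to $u_1,\dots,u_k$, and the symmetrization cancellation after weighting by $(\lambda_{k+1}-\lambda_i)$; your per-index inequality with the residual $\sum_j(\lambda_i-\lambda_j)(\lambda_{k+1}-\lambda_j)a_{ij}^2$ is exactly the paper's \eqref{2Section-Ineq17} rewritten via $b_{ij}=(\lambda_i-\lambda_j)a_{ij}$. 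The only cosmetic difference is that you phrase the Rayleigh quotient through the energy $\int_M\langle\nabla\varphi_i,T\nabla\varphi_i\rangle\,d\mu$ while the paper uses $-\int_M\varphi_i\mathcal{L}^{(f,T)}(\varphi_i)\,d\mu$, which coincide by the self-adjointness identity \eqref{1Introduction-Section12}.
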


\begin{rem}
When $T$ is an identity map, the estimate \eqref{2Section-Ineq2}
becomes the estimate (1.5) in Theorem 1.1 of Xia and Xu
\cite{XiaXu2014}; When $f=0$, the estimate \eqref{2Section-Ineq3}
becomes the estimate (2.4) in Theorem 2.1 of do Carmo, Wang and Xia
\cite{docarm2010} with $V=0$ and $\rho=1$. For the recent
developments about universal inequalities for eigenvalues of the
Laplace operator on Riemannian manifolds, we refer to
\cite{Chen2011,Cheng2005,Cheng2006,ChengYang2006,ChengYang2014,
Soufi2009,Jost2010,Jost2011,Wang2007,ZhangMa2013,Wangxia2007,Yang1991}
and the references therein.

\end{rem}

\begin{rem}

When $x: M\rightarrow \mathbb{R}^{N}$ is a compact self-shrinker, choosing
$T=I$  and $f=\frac{|x|^2}{2}$ in
\eqref{2Section-Ineq2} and \eqref{2Section-Ineq5}, respectively, we
obtain
\begin{equation}\label{100Section1}\aligned
&\sum_{i=1}^k(\lambda_{k+1}-\lambda_{i})^2\int_M u_i^2|\nabla
h|^2 d\mu\\
\leq&
\sum_{i=1}^k(\lambda_{k+1}-\lambda_{i})
\int\limits_M[u_i\mathcal{L}(h)+2\langle
\nabla u_i,\nabla h\rangle]^2 d\mu;
\endaligned\end{equation}
and
\begin{equation}\label{100Section2}
(\lambda_{A+1}-\lambda_{1})\int\limits_M u_1^2\langle \nabla
h_A,\nabla h_A\rangle\,d\mu\leq\int\limits_M [u_1\mathcal{L}(h_A)
+2\langle \nabla u_1,\nabla h_A\rangle]^2\,d\mu.
\end{equation} Replacing $h$ and $h_A$ by $x_A$ in
\eqref{100Section1} and \eqref{100Section2} and summing over $A$, we
derive Theorem 1.1 and Proposition 5.1 of Cheng and Peng in
\cite{ChengPeng2013}, respectively. Here $x_A, A=1,\cdots,N,$ denote components
of the position vector $x$.

\end{rem}

\begin{proof}[Proof of the estimate \eqref{2Section-Ineq2}]
Let \begin{equation}\label{2Section-Ineq7}
\varphi_i=hu_i-\sum_{j=1}^ka_{ij}u_j,
\end{equation}
where
$$a_{ij}=\int\limits_M h u_iu_j\,d\mu=a_{ji}.$$
It is easy to see that
$$\varphi_i|_{\partial M}=0,\ \ \ \int\limits_M \varphi_iu_j\,d\mu=0,
\ \ {\rm for}\ \forall \ i,j=1,2,\cdots,k.$$ We have from the
Rayleigh-Ritz inequality
\begin{equation}\label{2Section-Ineq8}
\lambda_{k+1}\int\limits_M \varphi_i^2\,d\mu\leq-\int\limits_M
\varphi_i\mathcal{L}^{(f,T)}(\varphi_i)\,d\mu.
\end{equation}
Putting
\begin{equation}\label{2Section-Ineq9}\aligned
\mathcal{L}^{(f,T)}(\varphi_i)&
=\mathcal{L}^{(f,T)}(hu_i)+\sum_{j=1}^ka_{ij}\lambda_ju_j\\
&=-\lambda_ihu_i+u_i\mathcal{L}^{(f,T)}(h)+2\langle\nabla
u_i,T\nabla h\rangle+\sum_{j=1}^ka_{ij}\lambda_ju_j
\endaligned\end{equation}
in \eqref{2Section-Ineq8} gives
\begin{equation}\label{2Section-Ineq10}\aligned
&\lambda_{k+1}\int\limits_M
\varphi_i^2\,d\mu\\
 \leq \ &\lambda_i\int\limits_M
\varphi_i(hu_i)\,d\mu-\int\limits_M
\varphi_i\left(u_i\mathcal{L}^{(f,T)}(h)+2\langle\nabla
u_i,T\nabla h\rangle\right)\,d\mu\\
=\ &\lambda_i\int\limits_M \varphi_i^2\,d\mu-\int\limits_M
\varphi_i\left(u_i\mathcal{L}^{(f,T)}(h)+2\langle\nabla u_i,T\nabla
h\rangle\right)\,d\mu,
\endaligned\end{equation} which shows that
\begin{equation}\label{2Section-Ineq11}
(\lambda_{k+1}-\lambda_i)\int\limits_M\varphi_i^2\,d\mu\leq P_i,
\end{equation} where
\begin{equation}\label{2Section-Ineq12}\aligned
P_i=&-\int\limits_M\varphi_i\left(u_i\mathcal{L}^{(f,T)}(h)+2\langle\nabla
u_i,T\nabla h\rangle\right)\,d\mu\\
=&-\int\limits_M hu_i\left(u_i\mathcal{L}^{(f,T)}(h)+2\langle\nabla
u_i,T\nabla
h\rangle\right)\,d\mu+\sum_{j=1}^ka_{ij}b_{ij}
\endaligned
\end{equation}
with $$b_{ij}=\int\limits_M u_j\left(u_i\mathcal{L}^{(f,T)}(h)
+2\langle\nabla
u_i,T\nabla h\rangle\right)\,d\mu.$$ Noticing
$$\aligned
\lambda_ia_{ij}&=-\int\limits_M \mathcal{L}^{(f,T)}(u_i)h u_j\,d\mu\\
&=-\int\limits_M\mathcal{L}^{(f,T)}(h u_j)u_i\,d\mu\\
&=-\int\limits_M\left(h\mathcal{L}^{(f,T)} (u_j)+u_j\mathcal{L}^{(f,T)}
(h)+2\langle\nabla u_j,T\nabla h\rangle\right)u_i\,d\mu\\
&=\lambda_ja_{ij}-\int\limits_M\left(u_j\mathcal{L}^{(f,T)}
(h)+2\langle\nabla
u_j,T\nabla h\rangle\right)u_i\,d\mu\\
&=\lambda_ja_{ij}-b_{ji}.
\endaligned$$
Hence, we have
\begin{equation}\label{2Section-Ineq13}
b_{ji}=(\lambda_j-\lambda_i)a_{ij}=-b_{ij}.
\end{equation}
By virtue of the Stokes formula, we have
\begin{equation}\label{2Section-Ineq14}
-\int\limits_M hu_i\left(u_i\mathcal{L}^{(f,T)}(h)+2\langle\nabla
u_i,T\nabla h\rangle\right)\,d\mu=\int\limits_M u_i^2\langle\nabla
h,T\nabla h\rangle\,d\mu.
\end{equation}
Thus, we have from \eqref{2Section-Ineq12}
\begin{equation}\label{2Section-Ineq15}
P_i=\int\limits_M u_i^2\langle\nabla h,T\nabla
h\rangle\,d\mu+\sum_{j=1}^k(\lambda_i-\lambda_j)a^{2}_{ij}.
\end{equation}
By the Schwarz inequality and \eqref{2Section-Ineq11}, we infer
\begin{equation}\label{2Section-Ineq16}\aligned
&(\lambda_{k+1}-\lambda_i)P_i^2\\
=\ &(\lambda_{k+1}-\lambda_i)\left(\int\limits_M
\varphi_i\Big(u_i\mathcal{L}^{(f,T)}(h)+2\langle\nabla u_i,T\nabla
h\rangle-\sum_{j=1}^kb_{ij}u_j\Big)\,d\mu\right)^2\\
\leq \ &(\lambda_{k+1}-\lambda_i)
\int\limits_M\varphi_i^2\,d\mu
\cdot\left(\int\limits_M
\left(u_i\mathcal{L}^{(f,T)}(h)+2\langle\nabla
u_i,T\nabla h\rangle\right)^2\,d\mu-\sum_{j=1}^kb_{ij}^2\right)\\
\leq \ &P_i\left(\int\limits_M
\left(u_i\mathcal{L}^{(f,T)}(h)+2\langle\nabla
u_i,T\nabla h\rangle\right)^2\,d\mu-\sum_{j=1}^kb_{ij}^2\right),
\endaligned\end{equation} which gives
\begin{equation}\label{2Section-Ineq17}\aligned
(\lambda_{k+1}-\lambda_i)P_i\leq
\int\limits_M\left(u_i\mathcal{L}^{(f,T)}(h)+2\langle\nabla u_i,T\nabla
h\rangle\right)^2\,d\mu-\sum_{j=1}^kb_{ij}^2.
\endaligned\end{equation}
Multiplying \eqref{2Section-Ineq17} by $(\lambda_{k+1}-\lambda_i)$
and taking sum on $i$ from 1 to $k$, we get
\begin{equation}\label{add2Section-Ineq17}\aligned
\sum_{i=1}^k(\lambda_{k+1}&-\lambda_i)^2 P_i\leq
-\sum_{i,j=1}^k(\lambda_{k+1}-\lambda_i)b_{ij}^2\\
&+\sum_{i=1}^k(\lambda_{k+1}-\lambda_i)
\int\limits_M\left(u_i\mathcal{L}^{(f,T)}(h)+2\langle\nabla
u_i,T\nabla h\rangle\right)^2\,d\mu.
\endaligned\end{equation}
Applying the inequality \eqref{2Section-Ineq15}, $a_{ij}=a_{ji}$ and
$b_{ij}=-b_{ji}$ into \eqref{add2Section-Ineq17} concludes the proof
of the estimate \eqref{2Section-Ineq2}.
\end{proof}

\begin{proof}[Proof of the estimate \eqref{2Section-Ineq3}]
From \eqref{2Section-Ineq11} and \eqref{2Section-Ineq15}, we have
obtained
\begin{equation}\label{2Section-Ineq18}
(\lambda_{k+1}-\lambda_i)\int\limits_M
\varphi_i^2\,d\mu\leq\int\limits_M u_i^2\langle\nabla h,T\nabla
h\rangle\,d\mu+\sum_{j=1}^k(\lambda_i-\lambda_j)a^2_{ij}.
\end{equation}
Let $$c_{ij}=\int\limits_M u_j\Big(\frac{1}{2}u_i\Delta_f
h+\langle\nabla u_i,\nabla h\rangle\Big)\,d\mu.$$ We have from
the Stokes formula
$$c_{ij}+c_{ji}=\int\limits_M\Big(u_iu_j\Delta_f h+\langle\nabla
(u_iu_j),\nabla h\rangle\Big)\,d\mu=0$$ and
\begin{equation}\label{2Section-Ineq19}\aligned &-2\int\limits_M
\varphi_i\Big(\frac{1}{2}u_i\Delta_f h+\langle\nabla u_i,\nabla
h\rangle\Big)\,d\mu\\
=&-2\int\limits_Mhu_i\Big(\frac{1}{2}u_i\Delta_f h+\langle\nabla
u_i,\nabla
h\rangle\Big)\,d\mu+2\sum_{j=1}^kc_{ij}a_{ij}\\
=&\int\limits_Mu_i^2|\nabla h|^2\,d\mu+2\sum_{j=1}^kc_{ij}a_{ij}.
\endaligned\end{equation}
Multiplying \eqref{2Section-Ineq19} by $(\lambda_{k+1}-\lambda_i)^2$
and using the Schwarz inequality and \eqref{2Section-Ineq18}, we
obtain $$\aligned  &(\lambda_{k+1}-\lambda_i)^2\Big(\int\limits_M
u_i^2|\nabla h|^2\,d\mu+2\sum_{j=1}^kc_{ij}a_{ij}\Big)\\
\leq&-2(\lambda_{k+1}-\lambda_i)^2
\int\limits_M\varphi_i\Big(\frac{1}{2}u_i\Delta_f
h+\langle\nabla u_i,\nabla h\rangle-\sum_{j=1}^kc_{ij}u_j\Big)\,d\mu\\
\leq \ &\delta(\lambda_{k+1}-\lambda_i)^3
\int\limits_M\varphi_i^2\,d\mu\\
+&\frac{\lambda_{k+1}-\lambda_i}{\delta}\Big(\int\limits_M\Big(\frac{1}{2}u_i\Delta_f
h+\langle\nabla u_i,\nabla
h\rangle\Big)^2\,d\mu-\sum_{j=1}^kc_{ij}^2\Big)\\
\leq \ &\delta(\lambda_{k+1}-\lambda_i)^2\Big(\int\limits_M
u_i^2\langle\nabla h,T\nabla
h\rangle\,d\mu+\sum_{j=1}^k(\lambda_i-\lambda_j)a^{2}_{ij}\Big)\\
+&\frac{\lambda_{k+1}-\lambda_i}
{\delta}\Big(\int\limits_M\Big(\frac{1}{2}u_i\Delta_f
h+\langle\nabla u_i,\nabla
h\rangle\Big)^2\,d\mu-\sum_{j=1}^kc_{ij}^2\Big),
\endaligned$$ where $\delta$ is any positive constant. Summing over
$i$ and noticing $a_{ij}=a_{ji}$, $c_{ij}=-c_{ji}$, we conclude
$$\aligned \sum_{i=1}^k(\lambda_{k+1}-\lambda_i)^2\int\limits_M
u_i^2&|\nabla h|^2\,d\mu
\leq\delta\sum_{i=1}^k(\lambda_{k+1}-\lambda_i)^2
\int\limits_M
u_i^2\langle\nabla h,T\nabla
h\rangle\,d\mu\\
&+\frac{1}{\delta}\sum_{i=1}^k(\lambda_{k+1}-\lambda_i)\int\limits_M\Big(\frac{1}{2}u_i\Delta_f
h+\langle\nabla u_i,\nabla h\rangle\Big)^2\,d\mu.
\endaligned$$
We complete the proof of the estimate \eqref{2Section-Ineq3}.
\end{proof}

\begin{proof}[Proof of the estimate \eqref{2Section-Ineq5}]
We let $\varphi_A=h_A u_1-u_1\int_M h_A u_1^2\,d\mu$. Then
\begin{equation}\label{2Section-Ineq20} \int\limits_M  \varphi_A u_1\,d\mu=0.
\end{equation} It has been shown  from \eqref{2Section-Ineq4} that
\begin{equation}\label{2Section-Ineq21}
\int\limits_M \varphi_A u_B\,d\mu=0, \ \ \ {\rm for}\ B=2,\cdots,A.
\end{equation} Hence, we have from the Rayleigh-Ritz inequality
\begin{equation}\label{2Section-Ineq22}
\lambda_{A+1}\int\limits_M\varphi_A
^2\,d\mu\leq-\int\limits_M\varphi_A
\mathcal{L}^{(f,T)}(\varphi_A)\,d\mu.
\end{equation}
According to the Stokes formula, a direct calculation yields
\begin{equation}\label{2Section-Ineq23}\aligned
&-\int\limits_M\varphi_A \mathcal{L}^{(f,T)}(\varphi_A)\,d\mu\\
=&-\int\limits_M\varphi_A\,
\mathcal{L}^{(f,T)}(h_A u_1)\,d\mu\\
=&-\int\limits_M \varphi_A\left(-\lambda_1h_A
u_1+u_1\mathcal{L}^{(f,T)}(h_A) +2\langle \nabla u_1,T\nabla
h_A\rangle\right)\,d\mu\\
=&\lambda_1\int\limits_M \varphi_A^2\,d\mu-\int\limits_M
\varphi_A\left(u_1\mathcal{L}^{(f,T)}(h_A) +2\langle \nabla u_1,T\nabla
h_A\rangle\right)\,d\mu.
\endaligned
\end{equation}
Putting \eqref{2Section-Ineq23} into the inequality
\eqref{2Section-Ineq22} gives
\begin{equation}\label{2Section-Ineq24}\aligned
&(\lambda_{A+1}-\lambda_{1})\int\limits_M \varphi_A^2\,d\mu\\
\leq&
-\int\limits_M \varphi_A\left(u_1\mathcal{L}^{(f,T)}(h_A) +2\langle
\nabla
u_1,T\nabla h_A\rangle\right)\,d\mu\\
=&-\int\limits_M h_Au_1\left(u_1\mathcal{L}^{(f,T)}(h_A) +2\langle \nabla
u_1,T\nabla h_A\rangle\right)\,d\mu\\
=&\int\limits_M u_1^2\langle \nabla h_A,T\nabla h_A\rangle)\,d\mu.
\endaligned\end{equation}

We define \begin{equation}\label{addlemma2proof25}
\omega_A:=-\int\limits_M \varphi_A[u_1\mathcal{L}^{(f,T)}(h_A)
+2\langle \nabla u_1,T\nabla h_A\rangle]\,d\mu=\int\limits_M
u_1^2\langle \nabla h_A,T\nabla h_A\rangle)\,d\mu.
\end{equation} Then \eqref{2Section-Ineq24} gives
\begin{equation}\label{2Section-Ineq26}
(\lambda_{A+1}-\lambda_{1})\int\limits_M \varphi_A^2\,d\mu\leq
\omega_A.
\end{equation}
From the Schwarz inequality and \eqref{2Section-Ineq26}, we obtain
\begin{equation}\label{2Section-Ineq27}\aligned
&(\lambda_{A+1}-\lambda_{1})\omega_A^2
\\
=\ &(\lambda_{A+1}-\lambda_{1})\left(\int\limits_M
\varphi_A\left(u_1\mathcal{L}^{(f,T)}(h_A)
+2\langle \nabla u_1,T\nabla h_A\rangle\right)\,d\mu\right)^2\\
\leq \ &(\lambda_{A+1}-\lambda_{1})\int\limits_M
\varphi_A^2\,d\mu\cdot\int\limits_M \left(u_1\mathcal{L}^{(f,T)}(h_A)
+2\langle \nabla u_1,T\nabla h_A\rangle\right)^2\,d\mu\\
\leq\ &\omega_A\int\limits_M \left(u_1\mathcal{L}^{(f,T)}(h_A) +2\langle
\nabla u_1,T\nabla h_A\rangle\right)^2\,d\mu,
\endaligned\end{equation} which gives
\begin{equation}\label{2Section-Ineq28}
(\lambda_{A+1}-\lambda_{1})\omega_A\leq\int\limits_M
\left(u_1\mathcal{L}^{(f,T)}(h_A) +2\langle \nabla u_1,T\nabla
h_A\rangle\right)^2\,d\mu.
\end{equation} Combining \eqref{2Section-Ineq28} with
\eqref{addlemma2proof25} yields the estimate \eqref{2Section-Ineq5}.
\end{proof}

\begin{proof}[Proof of the estimate \eqref{2Section-Ineq6}]
On the other hand, from the the Stokes formula again, one gets
\begin{equation}\label{2Section-Ineq29}\aligned
&-\int\limits_M \varphi_A\Big(\langle\nabla h_A,\nabla u_1\rangle
+\frac{1}{2}u_1\Delta_fh_A\Big)\,d\mu\\
=&-\int\limits_M h_A
u_1\Big(\langle\nabla h_A,\nabla u_1\rangle
+\frac{1}{2}u_1\Delta_fh_A\Big)\,d\mu\\
= & \ \frac{1}{2}\int\limits_Mu_1^2|\nabla h_A|^2\,d\mu.
\endaligned\end{equation}
Therefore, for any positive constant $\delta$, we derive from
\eqref{2Section-Ineq29}
\begin{equation}\label{2Section-Ineq30} \aligned
&\sqrt{\lambda_{A+1}-\lambda_{1}}\int\limits_Mu_1^2|\nabla
h_A|^2\,d\mu\\
=&-2\sqrt{\lambda_{A+1}-\lambda_{1}}\int\limits_M
\varphi_A\Big(\langle\nabla h_A,\nabla u_1\rangle
+\frac{1}{2}u_1\Delta_fh_A\Big)\,d\mu\\
\leq \ &\delta(\lambda_{A+1}-\lambda_{1})\int\limits_M
\varphi_A^2\,d\mu+\frac{1}{\delta}\int\limits_M \Big(\langle\nabla
h_A,\nabla u_1\rangle
+\frac{1}{2}u_1\Delta_fh_A\Big)^2\,d\mu\\
\leq \ &\delta \int\limits_M u_1^2\langle \nabla h_A,T\nabla
h_A\rangle)\,d\mu+\frac{1}{\delta}\int\limits_M \Big(\langle\nabla
h_A,\nabla u_1\rangle +\frac{1}{2}u_1\Delta_fh_A\Big)^2\,d\mu,
\endaligned
\end{equation} where in the last inequality we used \eqref{2Section-Ineq24}.
Hence, the desired estimate \eqref{2Section-Ineq6} is obtained.
\end{proof}

\section{Proof of theorems}

\begin{proof}[Proof of Theorem \ref{mainth1}]
When $x: M\rightarrow \mathbb{R}^{N}$ is a compact self-shrinker,
substituting $T$ and $f$ by $T^r$ and $\frac{|x|^2}{2}$ in
\eqref{2Section-Ineq3} and \eqref{2Section-Ineq6}, respectively, we
obtain for any positive constant $\delta$,
\begin{equation}\label{4Proof1}\aligned
&\sum_{i=1}^k(\lambda_{k+1}-\lambda_i)^2\int\limits_M u_i^2|\nabla
h|^2\,e^{-\frac{|x|^2}{2}}dv\\
\leq\delta &\sum_{i=1}^k(\lambda_{k+1}-\lambda_i)^2\int\limits_M
u_i^2\langle\nabla h,T^r\nabla
h\rangle\,e^{-\frac{|x|^2}{2}}dv\\
+\frac{1}{\delta}&\sum_{i=1}^k(\lambda_{k+1}-\lambda_i)\int\limits_M\Big(\frac{1}{2}u_i\mathcal{L}
(h)+\langle\nabla u_i,\nabla
h\rangle\Big)^2\,e^{-\frac{|x|^2}{2}}dv,
\endaligned\end{equation}
and
\begin{equation}\label{4Proof2}\aligned
\sqrt{\lambda_{A+1}-\lambda_{1}}\int\limits_Mu_1^2&|\nabla
h_A|^2\,e^{-\frac{|x|^2}{2}}dv\leq\delta \int\limits_M u_1^2\langle
\nabla
h_A,T^r\nabla h_A\rangle)\,e^{-\frac{|x|^2}{2}}dv\\
&+\frac{1}{\delta}\int\limits_M \Big(\langle\nabla h_A,\nabla
u_1\rangle
+\frac{1}{2}u_1\mathcal{L}(h_A)\Big)^2\,e^{-\frac{|x|^2}{2}}dv.
\endaligned\end{equation}

Let $E_1,\cdots,E_N$ be a canonical orthonormal basis of
$\mathbb{R}^N$. Then $x_A=\langle E_A,x\rangle$ and
\begin{equation}\label{4Proof3}\nabla x_A =\langle
E_A,e_i\rangle e_i=E_A^{\top},\end{equation}
 where $\top$ denotes the tangent projection to $M$. Therefore,
\begin{equation}\label{4Proof4} |\nabla
x_A|^2=|E_A^{\top}|^2\leq|E_A|^2=1, \ \ \ \ \forall \ A;
\end{equation}
\begin{equation}\label{4Proof5}
\sum_{A}|\nabla x_A|^2=n;
\end{equation}
\begin{equation}\label{4Proof6}
\sum_{A}\langle\nabla x_A,\nabla u_i\rangle^2=|\nabla u_i|^2;
\end{equation}
\begin{equation}\label{4Proof7}\aligned\sum_{A}\langle
\nabla x_A,T^r\nabla x_A\rangle&=\sum_{A}T^r_{ij}\langle
E_A,e_i\rangle\langle E_A,e_j\rangle\\
&=T^r_{ij}\langle e_i,e_j\rangle\\
&={\rm trace}(T^r)\\
&=(n-r)S_r;\endaligned\end{equation}
\begin{equation}\label{4Proof8}\aligned
\mathcal{L}(x_A)=&\Delta(x_A)-\langle x,\nabla x_A \rangle\\
=&\langle n\mathbf{H},E_A\rangle-\langle x,E_A^{\top}\rangle\\
=&-\langle x^{\perp},E_A\rangle-\langle x^{\top},E_A\rangle\\
=&-\langle x,E_A\rangle\\
=&-x_A;
\endaligned\end{equation}
\begin{equation}\label{4Proof9}\aligned
&\sum_{A}\int\limits_Mu_i\mathcal{L}(x_A) \langle\nabla u_i,\nabla
x_A\rangle\,e^{-\frac{|x|^2}{2}}dv\\
=&-\sum_{A}\int\limits_Mu_ix_A
\langle\nabla u_i,\nabla x_A\rangle\,e^{-\frac{|x|^2}{2}}dv\\
=\ &\frac{1}{4}\sum_{A}\int\limits_Mu_i^2
\mathcal{L}(x_A^2)\,e^{-\frac{|x|^2}{2}}dv\\
=\ &\frac{1}{2}\int\limits_Mu_i^2(n-|x|^2)\,e^{-\frac{|x|^2}{2}}dv.
\endaligned\end{equation} Here in \eqref{4Proof7}, we used Lemma 3.3
in \cite{Cao2007} which is still valid for self-shrinkers. Taking
$h=x_A$ in \eqref{4Proof1} and summing over $A$, we get
\begin{equation}\label{4Proof10}\aligned
&n\sum_{i=1}^k(\lambda_{k+1}-\lambda_i)^2\\
\leq \ &(n-r)\delta\sum_{i=1}^k(\lambda_{k+1}-\lambda_i)^2
\int\limits_M
u_i^2S_r\,e^{-\frac{|x|^2}{2}}dv\\
+ \ &  \frac{1}{\delta}\sum_{i=1}^k
(\lambda_{k+1}-\lambda_i)\int\limits_M
\Big(\frac{1}{4}u_i|x|^2
+|\nabla
u_i|^2+\frac{1}{2}u_i^2(n-|x|^2)\Big)\,
e^{-\frac{|x|^2}{2}}dv\\
\leq \ &(n-r)\max_M(S_r)\delta
\sum_{i=1}^k(\lambda_{k+1}-\lambda_i)^2\\
 + \ & \frac{1}{\delta}\sum_{i=1}^k
(\lambda_{k+1}-\lambda_i)\Big(\frac{\lambda_i}{\xi}+
\frac{2n-\min\limits_{M}|x|^2}{4}\Big).
\endaligned\end{equation}
Minimizing the right hand side of \eqref{4Proof10} by taking
$$\delta=\sqrt{\frac{\sum\limits_{i=1}^k
(\lambda_{k+1}-\lambda_i)\Big(\frac{\lambda_i}{\xi}+\frac{n}{2}
-\frac{1}{4}\min\limits_{M}|x|^2\Big)}
{(n-r)\sum\limits_{i=1}^k
(\lambda_{k+1}-\lambda_i)^2\max\limits_M(S_r)}}$$
gives
\begin{equation}\label{4Proof11}\aligned
\sum_{i=1}^k(\lambda_{k+1}-\lambda_i)^2
\leq\frac{4(n-r)}{n^2}\max_M(S_r)
\sum_{i=1}^k(\lambda_{k+1}-\lambda_i)\Big(\frac{\lambda_i}{\xi}+
\frac{2n-\min\limits_{M}|x|^2}{4}\Big).
\endaligned\end{equation}

On the other hand, according to the orthogonalization of Gram and
Schmidt, we get that there exists an  orthogonal $N\times N$-matrix $O=(O_A^B)$
such that
\begin{equation}\label{4Proof12}
\sum_{C}\int\limits_M O_A^C x_Cu_1u_B
=\sum_{C}O_A^C\int\limits_M x_C u_1u_B=0, \ \ \ {\rm for}\
B=2,\ldots,A.
\end{equation} Therefore, taking $h_A=\sum_{C}O_A^C
x_C$ in \eqref{4Proof2}, and summing over $A$, we obtain
\begin{equation}\label{4Proof13}\aligned
&\sum_{A}\sqrt{\lambda_{A+1}-\lambda_{1}}
\int\limits_Mu_1^2|\nabla
h_A|^2\,e^{-\frac{|x|^2}{2}}dv\\
\leq \ &(n-r)\max_M(S_r)\delta +\frac{1}{\delta}\Big(\frac{\lambda_1}{\xi}+ \frac{2n-\min\limits_{M}|x|^2}{4}\Big).
\endaligned\end{equation} Using \eqref{4Proof4}, we infer
\begin{equation}\label{4Proof14}
\aligned &\sum_{A=1}^N\sqrt{\lambda_{A+1}-\lambda_{1}}|\nabla
   h_{A}|^2\\
   \geq&\sum_{i=1}^n\sqrt{\lambda_{i+1}-\lambda_{1}}|\nabla
   h_{i}|^2+\sqrt{\lambda_{n+1}-\lambda_{1}}
   \sum\limits_{\alpha=n+1}^N|\nabla h_{\alpha}|^2\\
   =&\sum_{i=1}^n\sqrt{\lambda_{i+1}-\lambda_{1}}|\nabla
   h_{i}|^2+\sqrt{\lambda_{n+1}-\lambda_{1}}
   \left(n-\sum\limits_{j=1}^n|\nabla
   h_{j}|^2\right)\\
   =&\sum_{i=1}^n\sqrt{\lambda_{i+1}-\lambda_{1}}|\nabla
   h_{i}|^2+\sqrt{\lambda_{n+1}-\lambda_{1}}
   \sum\limits_{j=1}^n(1-|\nabla
   h_{j}|^2)\\
   \geq &\sum_{i=1}^n\sqrt{\lambda_{i+1}-\lambda_{1}}|\nabla
   h_{i}|^2+\sum\limits_{j=1}^n
   \sqrt{\lambda_{j+1}-\lambda_{1}}(1-|\nabla
   h_{j}|^2)\\
   =&\sum\limits_{i=1}^n\sqrt{\lambda_{i+1}-\lambda_{1}}.
\endaligned
\end{equation}
Applying \eqref{4Proof14} into \eqref{4Proof13} yields
\begin{equation}\label{4Proof15}
\sum\limits_{i=1}^n\sqrt{\lambda_{i+1}-\lambda_{1}}
\leq(n-r)\max_M(S_r)\delta
+\frac{1}{\delta}\Big(\frac{\lambda_1}{\xi}+
\frac{2n-\min\limits_{M}|x|^2}{4}\Big).
\end{equation} Minimizing the right hand side of \eqref{4Proof15} by taking
$$\delta=\sqrt{\frac{\frac{\lambda_1}{\xi}+
\frac{2n-\min\limits_{M}|x|^2}{4}}{(n-r)\max\limits_M(S_r)}}$$ gives the estimate
\eqref{maith-3}. We complete the proof of Theorem \ref{mainth1}.
\end{proof}

\begin{proof}[Proof of Theorem \ref{mainth2}] The estimate
\eqref{2maith-3} follows from \eqref{4Proof8} directly.
We let
$$\varphi_A=h_A u_0-u_0\int_M h_A u_0^2\,e^{-\frac{|x|^2}{2}}dv,$$
where  $u_0$ is the eigenfunction corresponding to $\lambda_0=0$ satisfying
$$\int\limits_M u_0^2\,e^{-\frac{|x|^2}{2}}dv=u_0^{2}\,{\rm vol}(M)=1.$$
Following the proof of the estimate \eqref{2Section-Ineq6},
we derive the following result by replacing $u_1$ with $u_0$ in
\eqref{2Section-Ineq6} of Theorem \ref{2Sectionthm1}:

\begin{thm}\label{4thm1}
Let $\lambda_{i}$ be the $i$-th eigenvalue of the closed eigenvalue problem \eqref{maith-2} and $u_i$ the normalized eigenfunction
corresponding to $\lambda_{i}$ such that $\{u_i\}_0^\infty$ becomes
an orthonormal basis of $L^2(M)$ under the weighted measure
$e^{-\frac{|x|^2}{2}}dv$, that is
\begin{equation}\label{4Ineq16}
\left\{\begin{array}{l} \mathcal{L}_r(u_i)=-\lambda_{i} u_i, \ {\rm in}\ M;\\
\int\limits_Mu_iu_j\,e^{-\frac{|x|^2}{2}}dv=\delta_{ij}.
\end{array}\right.
\end{equation}
If there exists a function $h_A\in C^2(M)$ satisfying
\begin{equation}\label{4Ineq17}
\int\limits_M h_A u_0u_B\,e^{-\frac{|x|^2}{2}}dv=0, \ \ \ {\rm for}\
B=1,\cdots,A-1,
\end{equation} then we have,
for any positive constant $\delta$,
\begin{equation}\label{4Ineq19}\aligned
\sqrt{\lambda_{A}}\int\limits_M|\nabla
h_A|^2\,e^{-\frac{|x|^2}{2}}dv\leq&\delta \int\limits_M \langle
\nabla
h_A,T^r\nabla h_A\rangle)\,e^{-\frac{|x|^2}{2}}dv\\
&+\frac{1}{4\delta}\int\limits_M
(\mathcal{L}(h_A))^2\,e^{-\frac{|x|^2}{2}}dv.
\endaligned\end{equation}
\end{thm}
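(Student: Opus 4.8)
The plan is to run the argument behind estimate \eqref{2Section-Ineq6} essentially verbatim, specializing $T\to T^r$ and $f\to\frac{|x|^2}{2}$ (so that $\Delta_f=\Delta-\langle x,\nabla\cdot\rangle=\mathcal{L}$, as in \eqref{4Proof8}), and replacing the triple $(u_1,\lambda_1,\lambda_{A+1})$ by $(u_0,\lambda_0,\lambda_A)$. The one genuinely new structural input is that $u_0$ is \emph{constant}: since $\lambda_0=0$ forces $\mathcal{L}_r(u_0)=0$, pairing with $u_0$ and using the positive definiteness of $T^r$ together with \eqref{1Introduction-Section12} gives $\int_M\langle\nabla u_0,T^r\nabla u_0\rangle\,d\mu=0$, whence $\nabla u_0\equiv 0$ and $u_0^2=1/\mathrm{vol}(M)$. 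This is precisely what makes the $\nabla u_0$-terms drop out of the analogue of \eqref{2Section-Ineq6}.

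First I would set $\varphi_A=h_Au_0-u_0\int_M h_Au_0^2\,e^{-\frac{|x|^2}{2}}dv$. By construction $\int_M\varphi_Au_0\,d\mu=0$, and by the hypothesis \eqref{4Ineq17} together with the orthonormality $\int_M u_0u_B\,d\mu=0$ one gets $\int_M\varphi_Au_B\,d\mu=0$ for $B=1,\dots,A-1$. Thus $\varphi_A$ is $L^2(d\mu)$-orthogonal to the $A$ eigenfunctions $u_0,u_1,\dots,u_{A-1}$, so the Rayleigh--Ritz inequality yields $\lambda_A\int_M\varphi_A^2\,d\mu\le-\int_M\varphi_A\mathcal{L}_r(\varphi_A)\,d\mu$. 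The index here is $\lambda_A$ rather than $\lambda_{A+1}$, which is exactly why the factor $\sqrt{\lambda_A}$ appears and why $\lambda_0=0$ contributes nothing.

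Next, because $u_0$ is constant we have $\nabla\varphi_A=u_0\nabla h_A$, so self-adjointness \eqref{1Introduction-Section12} gives $-\int_M\varphi_A\mathcal{L}_r(\varphi_A)\,d\mu=\int_M\langle\nabla\varphi_A,T^r\nabla\varphi_A\rangle\,d\mu=u_0^2\int_M\langle\nabla h_A,T^r\nabla h_A\rangle\,d\mu$, and hence $\lambda_A\int_M\varphi_A^2\,d\mu\le u_0^2\int_M\langle\nabla h_A,T^r\nabla h_A\rangle\,d\mu$. The Stokes computation behind \eqref{2Section-Ineq29}, now with $\langle\nabla h_A,\nabla u_0\rangle=0$ and $\int_M\mathcal{L}(h_A)\,d\mu=0$ on the closed manifold, collapses to $-\tfrac12\int_M\varphi_Au_0\mathcal{L}(h_A)\,d\mu=\tfrac12 u_0^2\int_M|\nabla h_A|^2\,d\mu$. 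I would then multiply by $2\sqrt{\lambda_A}$, apply Young's inequality $ab\le\delta a^2+\frac{1}{4\delta}b^2$ with $a=\sqrt{\lambda_A}\,\varphi_A$ and $b=u_0\mathcal{L}(h_A)$, insert the bound on $\lambda_A\int_M\varphi_A^2\,d\mu$, and finally cancel the common positive constant $u_0^2$ to arrive at \eqref{4Ineq19}.

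I do not expect a genuine obstacle, since this is a faithful transcription of the earlier argument; the only points demanding care are bookkeeping ones. Specifically: (i) confirming $u_0$ is constant so the $\langle\nabla h_A,\nabla u_0\rangle$ term vanishes, which is also what turns the coefficient $\frac1\delta$ of \eqref{2Section-Ineq6} into $\frac{1}{4\delta}$, the surviving square being $\big(\tfrac12 u_0\mathcal{L}(h_A)\big)^2$; (ii) the index shift, i.e.\ that orthogonality to the $A$ functions $u_0,\dots,u_{A-1}$ — rather than to $u_1,\dots,u_A$ as in the boundary case — produces $\sqrt{\lambda_A}$; and (iii) noting that the operator inside the final square is $\mathcal{L}=\mathcal{L}_0$, not $\mathcal{L}_r$, precisely because $\Delta_f=\mathcal{L}$ for the self-shrinker weight $f=\frac{|x|^2}{2}$.
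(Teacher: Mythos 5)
Your proposal is correct and follows essentially the same route as the paper: the paper obtains Theorem \ref{4thm1} precisely by rerunning the proof of the estimate \eqref{2Section-Ineq6} with $u_1$ replaced by the constant zeroth eigenfunction $u_0$, so that $\nabla u_0=0$ kills the $\langle\nabla h_A,\nabla u_0\rangle$ term, the gap $\lambda_{A+1}-\lambda_1$ becomes $\lambda_A-\lambda_0=\lambda_A$, and the coefficient $\tfrac1\delta$ becomes $\tfrac{1}{4\delta}$. Your write-up simply makes explicit the bookkeeping (constancy of $u_0$ via positive definiteness of $T^r$, the index shift in Rayleigh--Ritz, cancellation of $u_0^2=1/\mathrm{vol}(M)$) that the paper leaves implicit.
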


According to the orthogonalization of Gram and Schmidt, we get that
there exists an orthogonal matrix $O=(O_A^B)$ such that
\begin{equation}\label{4Ineq20}
\sum_{C}\int\limits_M O_A^C x_Cu_0u_B\,e^{-\frac{|x|^2}{2}}dv
=\sum_{\gamma}O_A^C\int\limits_M x_C
u_0u_B\,e^{-\frac{|x|^2}{2}}dv=0,
\end{equation}
 where $ B=1,\ldots,A-1$. Taking $h_A=\sum_{C}O_A^C x_C$ in \eqref{4Ineq19}, and
summing over $A$ from 1 to $N$, we obtain
\begin{equation}\label{4Ineq29}\aligned
\sum_{A}\sqrt{\lambda_{A}}\int\limits_M|\nabla
x_A|^2\,e^{-\frac{|x|^2}{2}}dv\leq&\delta\sum_{A} \int\limits_M
\langle \nabla
x_A,T^r\nabla x_A\rangle)\,e^{-\frac{|x|^2}{2}}dv\\
&+\frac{1}{4\delta}\sum_{A}\int\limits_M
(\mathcal{L}(x_A))^2\,e^{-\frac{|x|^2}{2}}dv.
\endaligned\end{equation}
Using \eqref{4Proof8}, we have
$$\aligned
\sum_{A}\int\limits_M
(\mathcal{L}(x_A))^2\,e^{-\frac{|x|^2}{2}}dv=&-\sum_{A}\int\limits_M
x_A\mathcal{L}(x_A)\,e^{-\frac{|x|^2}{2}}dv\\
=&\sum_{A}\int\limits_M |\nabla x_A
|^2\,e^{-\frac{|x|^2}{2}}dv=n\,{\rm vol}(M).
\endaligned$$
From the similar argument as in \eqref{4Proof14}, we infer
\begin{equation}\label{4Ineq30}\sum_{A}\sqrt{\lambda_{A}}|\nabla
x_A|^2\geq\sum_{i=1}^n\sqrt{\lambda_{i}}.
\end{equation}
Therefore, we derive from \eqref{4Ineq29}
\begin{equation}\label{4Ineq31}
\sum_{i=1}^n\sqrt{\lambda_{i}}\,{\rm vol}(M)\leq\delta(n-r) \int\limits_M S_r\,e^{-\frac{|x|^2}{2}}dv+\frac{n\,{\rm vol}(M)}{4\delta}.
\end{equation}
Minimizing the right hand side of \eqref{4Ineq31}, we derive the
desired \eqref{2maith-4}.
We complete the proof
of \eqref{2maith-4}.

If the equality in \eqref{2maith-4} occurs, then inequalities
\eqref{2Section-Ineq22}, \eqref{2Section-Ineq30} and \eqref{4Ineq30}
become equalities. Hence, we have
\begin{equation}\label{4Ineq32}
\lambda_{1}=\lambda_{2}=\cdots=\lambda_{N};
\end{equation}
\begin{equation}\label{4Ineq33}
\mathcal{L}(\varphi_A)=-2\delta\sqrt{\lambda_{1}}\,\varphi_A
\end{equation} with $2\delta\sqrt{\lambda_{1}}=1$.
We remark that the relationship \eqref{4Ineq33} is equivalent to
\eqref{4Proof8} for compact self-shrinkers. In fact, applying
$$\varphi_A=h_A u_0-u_0\int\limits_M h_A
u_0^2\,e^{-\frac{|x|^2}{2}}dv$$ into
$$\mathcal{L}(\varphi_A)=-2\delta\sqrt{\lambda_{1}}\,\varphi_A,$$
we infer
$$\Big(2\delta\sqrt{\lambda_{1}}-1\Big)h_A=
2\delta\sqrt{\lambda_{1}}u_{0}^{2}\int\limits_M h_A
\,e^{-\frac{|x|^2}{2}}dv.$$ It follows from \eqref{4Proof8} that
$$\int\limits_M h_A
\,e^{-\frac{|x|^2}{2}}dv=0.$$ Thus, we obtain
$2\delta\sqrt{\lambda_{1}^{\mathcal{L}_r}}=1$.
\end{proof}



\bibliographystyle{Plain}

\end{document}